\newcommand{\doi}[1]{\href{http://dx.doi.org/#1}{doi:\texttt{#1}}}
\newcommand{\urlprefix}{}
\newcommand{\spacing}[1]{
\renewcommand{\baselinestretch}{#1}
\setlength{\footnotesep}{\baselinestretch\footnotesep}}
\theoremstyle{plain}
\newtheorem{theorem}{Theorem}
\newtheorem{lemma}[theorem]{Lemma}
\newtheorem{proposition}[theorem]{Proposition}
\newcommand{\B}{\ensuremath{\mathcal{B}}}
\DeclareMathOperator{\tw}{tw}
\begin{document}

\title[Nordhaus-Gaddum for Treewidth]{Nordhaus-Gaddum for Treewidth}

\author{Gwena\"el Joret} \address{\newline D\'epartement
  d'Informatique \newline Universit\'e Libre de Bruxelles \newline
  Brussels, Belgium} \email{gjoret@ulb.ac.be}

\author{David~R.~Wood} \address{\newline Department of Mathematics and
  Statistics \newline The University of Melbourne \newline Melbourne,
  Australia} \email{woodd@unimelb.edu.au}

\thanks{\textbf{MSC Classification}: graph minors 05C83}

\thanks{This work was supported in part by the Actions de Recherche
  Concert\'ees (ARC) fund of the Communaut\'e fran\c{c}aise de
  Belgique.  Gwena\"el Joret is a Postdoctoral Researcher of the Fonds
  National de la Recherche Scientifique (F.R.S.--FNRS), and is also
  supported by an Endeavour Fellowship from the Australian Government.
  David Wood is supported by a QEII Research Fellowship from the
  Australian Research Council (ARC)}

\date{\today}

\begin{abstract}
  We prove that for every $n$-vertex graph $G$, the treewidth of $G$
  plus the treewidth of the complement of $G$ is at least $n-2$. This
  bound is tight.
\end{abstract}

\maketitle

\bigskip Nordhaus-Gaddum-type theorems establish bounds on
$f(G)+f(\overline G)$ for some graph parameter $f$, where
$\overline{G}$ is the complement of a graph $G$. The literature has
numerous examples; see
\citep{BBFHH,FKSSW05,NG56,GHW92,SS06,Stiebitz-DM92,RT00} for a
few. Our main result is the following Nordhaus-Gaddum-type theorem for
treewidth\footnote{While treewidth is normally defined in terms of
  tree decompositions (see \citep{Diestel4}), it can also be defined
  as follows. A graph $G$ is a \emph{$k$-tree} if $G\cong K_{k+1}$ or
  $G-v$ is a $k$-tree for some vertex $v$ whose neighbours induce a
  $k$-clique. Then the \emph{treewidth} of a graph $G$ is the minimum
  integer $k$ such that $G$ is a spanning subgraph of a $k$-tree.  See
  \citep{Bodlaender-TCS98,Reed97} for surveys on treewidth.

  Let $G$ be a graph. Two subsets of vertices $A$ and $B$ in $G$
  \emph{touch} if $A\cap B\neq\emptyset$, or some edge of $G$ has one endpoint
  in $A$ and the other endpoint in $B$. A \emph{bramble} in $G$ is a
  set of subsets of $V(G)$ that induce connected subgraphs and pairwise touch.  A set $S$ of
  vertices in $G$ is a \emph{hitting set} of a bramble \B\ if $S$
  intersects every element of \B. The \emph{order} of \B\ is the
  minimum size of a hitting set. \citet{SeymourThomas-JCTB93} proved
  the Treewidth Duality Theorem, which says that a graph $G$ has
  treewidth at least $k$ if and only if $G$ contains a bramble of
  order at least $k+1$.}, which is a graph parameter of particular
importance in structural and algorithmic graph theory. Let $\tw(G)$
denote the treewidth of a graph $G$.

\begin{theorem}
  \label{Main}
  For every graph $G$ with $n$
  vertices, $$\tw(G)+\tw(\overline{G})\geq n-2\enspace.$$
\end{theorem}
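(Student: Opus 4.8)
The plan is to use the Treewidth Duality Theorem quoted in the footnote to turn the problem into a bramble construction. Write $t=\tw(G)$. It suffices to exhibit a bramble in $\overline{G}$ of order at least $n-t-1$: by duality this gives $\tw(\overline{G})\ge n-t-2$, which is exactly $\tw(G)+\tw(\overline{G})\ge n-2$. (The statement is symmetric in $G$ and $\overline{G}$, so there is no loss in fixing attention on $\overline{G}$.) The whole difficulty is thus concentrated in building one sufficiently large bramble in the complement.

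\textbf{A reformulation, and the key clique bound.} First I would record a convenient restatement of ``order $\ge n-t-1$''. A bramble has order $\ge n-t-1$ if and only if it admits no hitting set of size $\le n-t-2$, i.e.\ if and only if \emph{every} set $W$ with $|W|\ge t+2$ contains some element of the bramble. Now comes the point that makes $t$ enter: since $\omega(G)\le \tw(G)+1=t+1$, no set of $t+2$ vertices is a clique of $G$, so every $(t+2)$-subset of $V(G)$ contains a non-edge of $G$, that is, an edge of $\overline{G}$. Hence the edges of $\overline{G}$ form a family of connected subsets that already meets every $(t+2)$-set; the \emph{only} obstruction to their being a bramble of order $\ge n-t-1$ is the requirement that the chosen elements pairwise touch.

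\textbf{Disposing of the degenerate cases.} Before fighting the touching condition I would clear away disconnected cases by induction on $n$. If $\overline{G}$ is disconnected then $G$ is a join $G_1+G_2$, so $\overline{G}=\overline{G_1}\sqcup\overline{G_2}$ and $\tw(\overline{G})=\max\{\tw(\overline{G_1}),\tw(\overline{G_2})\}$, while the treewidth of a join satisfies $\tw(G)=\min\{n_1+\tw(G_2),\,n_2+\tw(G_1)\}$, where $n_i=|V(G_i)|$. Feeding the inductive bounds $\tw(G_i)+\tw(\overline{G_i})\ge n_i-2$ into these two formulas, a one-line calculation shows that both expressions inside the $\min$ are at least $n-2$, so $\tw(G)+\tw(\overline{G})\ge n-2$. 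The case where $G$ is disconnected is symmetric. Thus one may assume that $G$ and $\overline{G}$ are both connected.

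\textbf{The main case and the expected obstacle.} The remaining case, with $G$ and $\overline{G}$ both connected, is where the real bramble must be constructed: the edges of $\overline{G}$ meet every $(t+2)$-set but need not pairwise touch. My plan is to replace them by large connected pieces that do touch. For each $X$ with $|X|\le n-t-2$ consider $\overline{G}-X=\overline{G[W]}$, where $W=V(G)\setminus X$ has $|W|\ge t+2$; the components of $\overline{G[W]}$ correspond to the factors of the join-decomposition of $G[W]$. Since $\tw(G[W])\le t$ while the complete multipartite graph on those factors forces $\tw(G[W])\ge |W|-\max_i|H_i|$, the largest factor has at least $|W|-t$ vertices, so $\overline{G}-X$ has a component missing at most $t$ vertices of $W$, unique once $|W|>2t$. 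I would let the bramble consist of these dominant components. The hard part will be exactly the touching/consistency step: verifying that the dominant components chosen for different sets $X$ pairwise touch and fit together into a single bramble (equivalently, a haven) of order $n-t-1$. This is the place where connectivity of both $G$ and $\overline{G}$, together with the clique bound $\omega(G)\le t+1$, must be combined, and I expect it to be the crux of the whole argument.
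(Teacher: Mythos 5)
Your proposal sets up the right framework---the Treewidth Duality Theorem, a bramble in $\overline{G}$, and the count via $\omega(G)\le t+1$ showing that any hitting set for the non-edges of $G$ must have at least $n-t-1$ vertices---but it stops exactly at the step that carries all the content: making the bramble elements pairwise touch. You say yourself that verifying that the ``dominant components'' chosen for different sets $X$ fit together into a single bramble of order $n-t-1$ ``is the crux of the whole argument,'' and you do not carry it out. As written, the main case reduces the theorem to the construction of a haven of order $n-t-1$ in $\overline{G}$, which by duality is equivalent to the statement being proved; no argument is given for why the dominant components of $\overline{G}-X$ and $\overline{G}-X'$ touch for two different small sets $X$ and $X'$, and it is not clear that connectivity of $G$ and $\overline{G}$ together with $\omega(G)\le t+1$ suffices for this. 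So there is a genuine gap.

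The paper closes this gap with one short move that your proposal misses: do not work with $G$ itself, but with a $k$-tree $H$ (where $k=\tw(G)$) containing $G$ as a spanning subgraph. Since $H$ is chordal it has no induced $4$-cycle, and that is precisely the condition under which the ``naive'' bramble of single edges already works: if two disjoint edges $vw$ and $xy$ of $\overline{H}$ failed to touch, then $(v,x,w,y)$ would be an induced $4$-cycle in $H$. Since $H$ has no $(k+2)$-clique, every hitting set of this bramble has at least $n-k-1$ vertices, so $\tw(\overline{H})\ge n-k-2$, and $\overline{G}\supseteq\overline{H}$ finishes the proof. In other words, the touching obstruction you identified is real for general $G$, and the cure is to pass to a chordal supergraph rather than to engineer larger connected bramble elements. (Your reduction of the disconnected case is also heavier than needed---the paper requires no such case analysis---and both the join formula for treewidth and the complete multipartite lower bound you invoke would themselves require proof.)
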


The following lemma is the key to the proof of Theorem~\ref{Main}.

\begin{lemma}
  \label{General}
  Let $G$ be a graph with $n$ vertices, no induced 4-cycle, and no
  $k$-clique.  Then $\tw(\overline G)\geq n-k$.
\end{lemma}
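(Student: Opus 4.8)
The plan is to apply the Treewidth Duality Theorem to $\overline{G}$: it suffices to exhibit a bramble in $\overline{G}$ of order at least $n-k+1$. First I would translate the two hypotheses on $G$ into properties of the complement. Writing $H:=\overline{G}$, an induced $4$-cycle in $G$ is exactly an induced $2K_2$ (two disjoint edges with no further edges among the four vertices) in $H$, so the assumption that $G$ has no induced $4$-cycle says that $H$ contains no induced $2K_2$. Likewise a $k$-clique in $G$ is an independent set of size $k$ in $H$, so the assumption that $G$ has no $k$-clique says that the independence number satisfies $\alpha(H)\le k-1$. (If $H$ has no edge at all then $G$ is complete, forcing $n\le k-1$ and making the claimed bound trivial, so I may assume $H$ has an edge.)

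The key idea is that the edges of $H$ themselves form a bramble. Concretely, let $\B$ be the collection of all two-element vertex sets $\{u,v\}$ with $uv\in E(H)$. Each such set induces a $K_2$, which is connected, so I only need to check that any two members touch. Two edges sharing an endpoint touch trivially. If two edges $\{u,v\}$ and $\{x,y\}$ are disjoint and no edge of $H$ joins $\{u,v\}$ to $\{x,y\}$, then $\{u,v,x,y\}$ induces precisely the two edges $uv$ and $xy$, that is, an induced $2K_2$ --- which is forbidden. Hence some edge always joins the two sets, so they touch, and $\B$ is indeed a bramble.

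It remains to bound the order of $\B$. A set $S$ is a hitting set of $\B$ precisely when $S$ meets every edge of $H$, i.e.\ when $S$ is a vertex cover of $H$; so the order of $\B$ equals the minimum vertex cover number $\tau(H)$. By the Gallai identity $\tau(H)=n-\alpha(H)$, and using $\alpha(H)\le k-1$ this gives $\tau(H)\ge n-k+1$. The Treewidth Duality Theorem then yields $\tw(\overline{G})=\tw(H)\ge (n-k+1)-1=n-k$, as required. I expect the only real content to lie in the second paragraph --- recognising that $2K_2$-freeness of $H$ is exactly what forces the edges to pairwise touch; once the bramble is in hand, the order computation via $\tau(H)=n-\alpha(H)$ and the appeal to duality are routine.
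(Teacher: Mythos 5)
Your proof is correct and follows essentially the same route as the paper: the same bramble consisting of all edges of $\overline{G}$, the same touching argument (an induced $2K_2$ in $\overline{G}$ is an induced $4$-cycle in $G$), and the same lower bound on the order (your Gallai-identity step $\tau(H)=n-\alpha(H)$ is just a rephrasing of the paper's observation that the complement of a hitting set is a clique in $G$). The only difference is cosmetic: you translate the hypotheses into the complement before arguing, and you note the trivial no-edge case explicitly.
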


 \begin{proof}
   Let $\B:=\{\{v,w\}:vw\in E(\overline{G})\}$. If
   $\{v,w\}$ and $\{x,y\}$ do not touch for some $vw,xy\in
   E(\overline{G})$, then the four endpoints are distinct and
   $(v,x,w,y)$ is an induced 4-cycle in $G$, which is a contradiction.
   Thus $\B$ is a bramble in $\overline{G}$. Let $S$ be a hitting set
   for $\B$. Thus no edge in $\overline{G}$ has both endpoints in
   $V(\overline{G})\setminus S$. Hence $V(G)\setminus S$ is a clique
   in $G$. Therefore $n-|S|\leq k-1$ and $|S|\geq n-k+1$.  That is,
   the order of $\B$ is at least $n-k+1$.  By the Treewidth Duality
   Theorem, $\tw(\overline{G})\geq n-k$, as desired.
 \end{proof}

\begin{proof}[Proof of Theorem~\ref{Main}] 
  Let $k:=\tw(G)$. Let $H$ be a $k$-tree that contains $G$ has a
  spanning subgraph. Thus $H$ has no induced 4-cycle (it is chordal)
  and has no $(k+2)$-clique. By Lemma~\ref{General} and since
  $\overline G \supseteq \overline H$, we have $\tw(\overline G)\geq
  \tw(\overline H)\geq n-k-2$.  That is, $\tw(G)+\tw(\overline G)\geq
  n-2$.
\end{proof}

Lemma~\ref{General} immediately implies the following result of
independent interest.

\begin{theorem}
  \label{Girth}
  For every graph $G$ with girth at least 5, we have $\tw(\overline
  G)\geq n-3$.
\end{theorem}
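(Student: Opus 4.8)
The plan is to deduce Theorem~\ref{Girth} as an immediate corollary of Lemma~\ref{General}, by checking that the girth hypothesis supplies exactly the two structural conditions the lemma requires, with the clique parameter $k=3$.

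First I would unpack what girth at least $5$ means: the shortest cycle in $G$ has length at least $5$, so $G$ contains no cycle of length $3$ and no cycle of length $4$. The absence of a $3$-cycle is precisely the statement that $G$ is triangle-free, that is, $G$ has no $3$-clique. The absence of \emph{any} $4$-cycle is in particular the absence of an \emph{induced} $4$-cycle. Hence $G$ satisfies both hypotheses of Lemma~\ref{General} with $k=3$: it has no induced $4$-cycle and no $3$-clique. Applying the lemma then gives $\tw(\overline{G}) \geq n-3$, which is the claim.

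The only point deserving any attention — and it is barely an obstacle — is the translation between the two descriptions: one must observe that ``no $k$-clique'' specialised to $k=3$ is exactly the triangle-free condition, and that forbidding all $4$-cycles is a priori at least as strong as forbidding only induced ones. Because girth $\geq 5$ rules out short cycles of both lengths outright, both hypotheses hold with no further argument, and in particular no bramble needs to be exhibited here — all of that work is already packaged inside Lemma~\ref{General}. So the proof is essentially a one-line invocation of the lemma once the hypotheses are matched up.
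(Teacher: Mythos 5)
Your proof is correct and is exactly the paper's argument: the paper derives Theorem~\ref{Girth} as an immediate consequence of Lemma~\ref{General}, since girth at least $5$ rules out triangles (so no $3$-clique) and all $4$-cycles (so in particular no induced $4$-cycle), giving the bound with $k=3$.
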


For $k$-trees we have the following precise result, which proves that
the bound in Theorem~\ref{Main} is tight. Let $Q_n^k$ be the $k$-tree
consisting of a $k$-clique $C$ with $n-k$ vertices adjacent only to
$C$.

\begin{theorem}
  \label{kTree}
  For every $k$-tree $G$,
$$\tw(G)+\tw(\overline{G})=
\begin{cases}
  n-1&\text{ if }G\cong Q_n^k\\
  n-2&\text{ otherwise}\enspace.
\end{cases}
$$
\end{theorem}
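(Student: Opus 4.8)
The plan is to use the fact that $\tw(G)=k$ for every $k$-tree, so that the theorem reduces to computing $\tw(\overline G)$: I must show it equals $n-k-1$ when $G\cong Q_n^k$ and $n-k-2$ otherwise. The lower bound $\tw(\overline G)\ge n-k-2$ is free in both cases, since a $k$-tree is chordal and has no $(k+2)$-clique, so Lemma~\ref{General} (applied with $k+2$ in place of $k$) gives it at once. For $G\cong Q_n^k$ I would just complement the join $Q_n^k=K_k\vee\overline{K_{n-k}}$ to get $\overline{Q_n^k}=\overline{K_k}\sqcup K_{n-k}$, a disjoint union of treewidth $\max(0,n-k-1)=n-k-1$, so the sum is $n-1$. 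All the real content is therefore the upper bound $\tw(\overline G)\le n-k-2$ when $G\not\cong Q_n^k$.

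I would prove this upper bound by induction on $n$, using the standard facts that a $k$-tree on $n\ge k+2$ vertices has a simplicial vertex $v$ with $G-v$ again a $k$-tree, and that $\tw(H)\le\tw(H-v)+1$ for every vertex $v$ (add $v$ to every bag of an optimal decomposition of $\overline{G}-v=\overline{G-v}$). For the base case $n=k+3$ I would count edges: every $k$-tree on $k+3$ vertices has $\binom{k+1}{2}+2k$ edges, so $\overline G$ has exactly $\binom{k+3}{2}-\binom{k+1}{2}-2k=3$ edges; three edges form a cycle only if they form a triangle, a triangle in $\overline G$ is an independent triple in $G$, and together with the edge count this forces $G\cong K_{k+3}-K_3\cong Q_{k+3}^k$. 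Since $G\not\cong Q_{k+3}^k$, the graph $\overline G$ is a forest and $\tw(\overline G)\le 1=n-k-2$. In the inductive step $n\ge k+4$, if I can find a simplicial vertex $v$ with $G-v\not\cong Q_{n-1}^k$, then the induction hypothesis gives $\tw(\overline{G-v})\le n-k-3$ and hence $\tw(\overline G)\le n-k-2$.

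The main obstacle is exactly the existence of such a vertex, that is, ruling out $k$-trees all of whose simplicial deletions are isomorphic to $Q_{n-1}^k$. I would handle this through the characterization that a $k$-tree on $m\ge k+2$ vertices is $Q_m^k$ if and only if it has $k$ dominating vertices (vertices adjacent to all others): $k$ dominating vertices form a $k$-clique, and since $\omega(G)=k+1$ the other $n-k$ vertices are then pairwise non-adjacent, so $G=K_k\vee\overline{K_{n-k}}$. Assuming for contradiction that $G-v\cong Q_{n-1}^k$ for every simplicial $v$, I would fix one such $v$; as $G\not\cong Q_n^k$, this $v$ must attach not to the common base clique $C$ of $G-v\cong Q_{n-1}^k$ but to a clique $(C\setminus\{c\})\cup\{x\}$, where $x$ is one of the $n-1-k\ge 3$ simplicial leaves of $G-v$. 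Picking a different leaf $y\neq x$ (which exists as $n\ge k+4$), I would check that $y$ is still simplicial in $G$ while $G-y$ retains only the $k-1$ dominating vertices $C\setminus\{c\}$, the vertex $c$ failing since it is non-adjacent to $v$; hence $G-y\not\cong Q_{n-1}^k$, contradicting the assumption. This finishes the induction and, with the lower bound and the $Q_n^k$ computation, the theorem.
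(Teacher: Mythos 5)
Your proof is correct, but it reaches the upper bound $\tw(\overline G)\le n-k-2$ for $G\not\cong Q_n^k$ by a genuinely different route from the paper's. The paper does it in one non-inductive step: from a construction ordering of the $k$-tree it extracts a $(k+1)$-clique $K=\{x_1,\dots,x_{k+1}\}$ in which every $x_i$ has a neighbour $y_i$ outside $K$ (this is exactly where $G\not\cong Q_n^k$ is used), and then exhibits $\overline G$ directly as a spanning subgraph of the $(n-k-2)$-tree obtained from the $(n-k-1)$-clique $V(G)\setminus K$ by attaching each $x_i$ to $\bigl(V(G)\setminus K\bigr)\setminus\{y_i\}$. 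You instead induct on $n$, with a base case $n=k+3$ settled by counting ($\overline G$ has exactly $3$ edges, and a triangle in $\overline G$ would force $G\cong Q_{k+3}^k$, so $\overline G$ is a forest) and an inductive step that produces a simplicial vertex whose deletion is a $k$-tree not isomorphic to $Q_{n-1}^k$, via the characterization of $Q_m^k$ ($m\ge k+2$) as the $k$-tree with $k$ dominating vertices. I checked the delicate point --- if $G-v\cong Q_{n-1}^k$ for a simplicial $v$, then $v$ attaches to $(C\setminus\{c\})\cup\{x\}$ and a second leaf $y$ is simplicial with $G-y$ having only the $k-1$ dominating vertices $C\setminus\{c\}$ --- and it works, since $c$ misses $v$, the leaves miss each other and $v$, and $x$ misses the remaining leaves (there is at least one, as $n-1-k\ge 3$). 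The paper's argument buys brevity and an explicit host $(n-k-2)$-tree for $\overline G$; yours is longer but entirely elementary and localizes the use of $G\not\cong Q_n^k$ differently. Two small things to make explicit in a write-up: for $n\le k+2$ every $k$-tree is $Q_n^k$, so $n=k+3$ really is the first case of the induction; and the claim that \emph{only} $C\setminus\{c\}$ dominates $G-y$ needs the one-line verification above, not just the observation that $c$ fails.
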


\begin{proof}
  First suppose that $G\cong Q_n^k$. Then $\overline{G}$ consists of
  $K_{n-k}$ and $k$ isolated vertices. Thus $\tw(\overline{G})=n-k-1$,
  and $\tw(G)+\tw(\overline{G})=n-1$.

  Now assume that $G\not\cong Q_n^k$.  By the definition of $k$-tree,
  $V(G)$ can be labelled $v_1,\dots,v_n$ such that
  $\{v_1,\dots,v_{k+1}\}$ is a clique, and for $j\in\{k+2,\dots,n\}$,
  the neighbourhood of $v_j$ in $G[\{v_1,\dots,v_{j-1}\}]$ is a
  $k$-clique $C_j$.  If $C_{k+2},\dots,C_n$ are all equal then $G\cong
  Q_n^k$. Thus $C_j\neq C_{k+2}$ for some minimum integer $j$. Observe
  that each vertex in $C_j$ has a neighbour outside of $C_j$.
  Arbitrarily label $C_j=\{x_1,\dots,x_{k+1}\}$, and let $y_i$ be a
  neighbour of each $x_i$ outside of $C_j$.

  We now describe an $(n-k-2)$-tree $H$ that contains $\overline
  G$. Let $A:=V(G)\setminus C_j$ be the starting $(n-k-1)$-clique of
  $H$. Add each vertex $x_i$ to $H$ adjacent to
  $A\setminus\{y_i\}$. Observe that $H$ is an $(n-k-2)$-tree and $\overline G$ is a spanning
  subgraph of $H$.
  % Construct a tree decomposition of $\overline{G}$ as follows. Let
  % $A:=V(G)\setminus C_j$ be the first bag, and for each $x_i\in C_j$
  % let $A-y_i+x_i$ be another bag only adjacent to the first
  % bag. Thus the underlying tree is a star.  Each bag contains
  % $n-k-1$ vertices.  Each vertex in $V(G)\setminus C_j$ is in every
  % bag, except each $y_i$ which is in every bag except one leaf bag.
  % Each vertex $x_i$ is in exactly one bag. In every case, for each
  % vertex $v$ of $\overline{G}$, the bags that contain $v$ form a
  % connected subtree of the underlying star. Hence we have a tree
  % decomposition of $\overline{G}$ with width $n-k-2$.
  Thus $\tw(\overline G)\leq n-k-2$ and $\tw(G)+\tw(\overline{G})\leq
  n-2$, with equality by Theorem~\ref{Main}.
\end{proof}

In view of Theorem~\ref{Main}, it is natural to also consider how
large $\tw(G) + \tw(\overline G)$ can be.  Every $n$-vertex graph $G$
satisfies $\tw(G)\leq n-1$, implying $\tw(G) + \tw(\overline G) \leq
2n - 2$. It turns out that this trivial upper bound is tight up to
lower order terms.  Indeed, \citet{PS11} proved that, if
$G\in\mathcal{G}(n, p)$ is a random $n$-vertex graph with edge
probability $p =\omega(\frac{1}{n})$ in the sense of Erd\H{o}s and
R\'enyi, then asymptotically almost surely $\tw(G)=n-o(n)$; see
\citep{KB92,LLO10} for related results. Setting $p=\frac12$, it
follows that asymptotically almost surely, $\tw(G)=n-o(n)$ and
$\tw(\overline G) = n - o(n)$, and hence $\tw(G) + \tw(\overline G) =
2n - o(n)$.

\medskip Theorems~\ref{Main} and \ref{kTree} can be reinterpreted as
follows.

\begin{proposition}
  \label{CliqueSize}
  For all graphs $G_1$ and $G_2$, the union
  $G_1\cup G_2$ contains no clique on $\tw(G_1)+\tw(G_2)+3$ vertices.
  Conversely, there exist graphs $G_1$ and $G_2$   such that $G_1\cup G_2$ contains a clique on $\tw(G_1)+\tw(G_2)+2$
  vertices.
\end{proposition}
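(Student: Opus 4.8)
The plan is to obtain both halves of the proposition as direct reformulations of Theorem~\ref{Main} and Theorem~\ref{kTree}; the governing idea is that a clique in $G_1\cup G_2$ certifies that the complement of one restriction embeds into the other.

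For the first (negative) assertion, I would proceed by contraposition. Suppose $G_1\cup G_2$ contains a clique on a vertex set $S$, and write $m:=|S|$. Then on $S$ every pair of vertices is joined in $G_1$ or in $G_2$, so $G_1[S]\cup G_2[S]=K_m$; equivalently, every non-edge of $G_1[S]$ is an edge of $G_2[S]$, so $\overline{G_1[S]}$ is a spanning subgraph of $G_2[S]$. Applying Theorem~\ref{Main} to the $m$-vertex graph $G_1[S]$ gives $\tw(G_1[S])+\tw(\overline{G_1[S]})\geq m-2$. Since treewidth never increases when passing to a subgraph, I would bound $\tw(G_1[S])\leq\tw(G_1)$ and $\tw(\overline{G_1[S]})\leq\tw(G_2[S])\leq\tw(G_2)$, the middle inequality using $\overline{G_1[S]}\subseteq G_2[S]$ and the last using that $G_2[S]$ is induced in $G_2$. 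Together these yield $m\leq\tw(G_1)+\tw(G_2)+2$, so $G_1\cup G_2$ has no clique on $\tw(G_1)+\tw(G_2)+3$ vertices.

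For the converse, I would simply exhibit the tightness pair already supplied by Theorem~\ref{kTree}. Any $k$-tree $G$ with $G\not\cong Q_n^k$ satisfies $\tw(G)+\tw(\overline G)=n-2$; the smallest instance is the self-complementary path $P_4$, for which $\tw(P_4)=1$. Taking $G_1:=G$ and $G_2:=\overline G$, the union $G_1\cup G_2$ is the complete graph $K_n$, and hence contains a clique on all $n=\tw(G_1)+\tw(G_2)+2$ vertices.

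Since the statement is essentially a translation of the earlier theorems, I do not expect any genuine obstacle. The only delicate point is the chain of monotonicity inequalities in the first part: one must route $\overline{G_1[S]}$ through the \emph{induced} subgraph $G_2[S]$---rather than comparing it with $G_2$ directly---so that the subgraph and induced-subgraph bounds compose correctly.
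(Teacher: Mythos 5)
Your proposal is correct and follows essentially the same route as the paper: both parts restrict $G_1$ and $G_2$ to the clique $S$, apply Theorem~\ref{Main} to the (near-)complementary pair $G_1[S]$, $G_2[S]$ together with monotonicity of treewidth under subgraphs, and obtain the converse from Theorem~\ref{kTree} by taking $G_2=\overline{G_1}$ for a suitable $k$-tree $G_1$. The only cosmetic difference is that the paper first normalizes to $V(G_1)=V(G_2)$ and $E(G_1)\cap E(G_2)=\emptyset$ so that $G_1[S]$ and $G_2[S]$ are exactly complementary, whereas you route $\overline{G_1[S]}$ through $G_2[S]$ as a spanning subgraph; the two are interchangeable.
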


\begin{proof}
For the first claim, we may assume that $V(G_1)=V(G_2)$ and $E(G_1)\cap E(G_2)=\emptyset$.  Let $S$ be a
  clique in $G_1\cup G_2$.  Thus $G_1[S]$ and $G_2[S]$ are
  complementary.  By Theorem~\ref{Main}, $\tw(G_1)+\tw(G_2)\geq
  \tw(G_1[S])+\tw(G_2[S])\geq |S|-2$. Thus $|S|\leq
  \tw(G_1)+\tw(G_2)+2$ as desired.  The converse claim follows from
  Theorem~\ref{kTree}.
\end{proof}

Proposition~\ref{CliqueSize} suggests studying $G_1\cup G_2$
further. For example, what is the maximum of $\chi(G_1\cup G_2)$ taken
over all graphs $G_1$ and $G_2$ with $\tw(G_1)\leq k$ and
$\tw(G_2)\leq k$? By Proposition~\ref{CliqueSize} the answer is at
least $2k+2$. A minimum-degree greedy algorithm proves that
$\chi(G_1\cup G_2)\leq 4k$. This question is somewhat similar to
Ringel's earth--moon problem which asks for the maximum chromatic
number of the union of two planar graphs.

%%%%%%%%%%%%%%%%%%%%%%%%%%%%%%%%%%%%%%%%%%%%%%%%%%%%%%%%%%
% \bibliographystyle{myNatbibStyle} \bibliography{myBibliography}

\def\soft#1{\leavevmode\setbox0=\hbox{h}\dimen7=\ht0\advance \dimen7
  by-1ex\relax\if t#1\relax\rlap{\raise.6\dimen7
    \hbox{\kern.3ex\char'47}}#1\relax\else\if T#1\relax
  \rlap{\raise.5\dimen7\hbox{\kern1.3ex\char'47}}#1\relax \else\if
  d#1\relax\rlap{\raise.5\dimen7\hbox{\kern.9ex
      \char'47}}#1\relax\else\if D#1\relax\rlap{\raise.5\dimen7
    \hbox{\kern1.4ex\char'47}}#1\relax\else\if l#1\relax
  \rlap{\raise.5\dimen7\hbox{\kern.4ex\char'47}}#1\relax \else\if
  L#1\relax\rlap{\raise.5\dimen7\hbox{\kern.7ex
      \char'47}}#1\relax\else\message{accent \string\soft \space #1
    not defined!}#1\relax\fi\fi\fi\fi\fi\fi}

\end{document}